\newcommand{\Spec}{\mathop{\mathrm{Spec}}\nolimits}
\newcommand{\TITLE}{Weak N\'{e}ron models for cubic polynomial maps over a
  non-Archimedean field}
\newcommand{\TITLERUNNING}{Weak N\'{e}ron models for cubic polynomials}
\title[\TITLERUNNING]{\TITLE}
\date{}
\author[J.-Y. Briend]{Jean-Yves Briend}
\address{Centre de Math\'ematiques et d'Informatique\\
39, rue Fr\'ed\'eric Joliot-Curie\\
13453 Marseille cedex 13,
France.}
\email{briend@cmi.univ-mrs.fr}
\author[L.-C. Hsia]{Liang-Chung Hsia}
 \address{Department of Mathematics \\
         National Central University \\
         Chung-Li, Taiwan 32054 \\
         R. O. C.}
\email{hsia@math.ncu.edu.tw}
\subjclass{Primary: 11G99; Secondary:  11S82, 14G20,  37P05 }
\keywords{weak N\'{e}ron model, non-Archimedean dynamics, repelling
  fixed points, Julia set}
\thanks{The second-named author's research is supported by
  NSC-95-2115-M-008-003 of the National Science Concil of Taiwan.}
\date{\today}
\newtheorem{theorem}{\noindent {\bf Theorem}}[section]
\newtheorem{proposition}{\noindent {\bf Proposition}}[section]
\newtheorem{corollary}{\noindent {\bf Corollary}}[section]
\newtheorem{remark}{\noindent {\bf Remark}}
\nolinebreak \end{trivlist}}
\newenvironment{question}{\noindent{\bf Question }}{}
\renewcommand{\mathbb}{\mathbf}
\newcommand{\CC}{\mathbb{C}}
\newcommand{\KK}{\mathbb{K}}
\newcommand{\LL}{\mathbb{L}}
\newcommand{\PP}{\mathbb{P}}
\newcommand{\QQ}{\mathbb{Q}}
\newcommand{\RR}{\mathbb{R}}
\newcommand{\ZZ}{\mathbb{Z}}
\newcommand{\kk}{\mathbb{k}}
\newcommand{\cI}{\mathcal{I}}
\newcommand{\cM}{\mathcal{M}}
\newcommand{\cO}{\mathcal{O}}
\newcommand{\cV}{\mathcal{V}}
\newcommand{\cX}{\mathcal{X}}
\mathchardef\Gammait="7100
\mathchardef\Deltait="7101
\mathchardef\Thetait="7102
\mathchardef\Lambdait="7103
\mathchardef\Xiit="7104
\mathchardef\Piit="7105
\mathchardef\Sigmait="7106
\mathchardef\Upsilonit="7107
\mathchardef\Phiit="7108
\mathchardef\Psiit="7109
\mathchardef\Omegait="710A
\DeclareFontFamily{U}{euex}{}
\DeclareFontShape{U}{euex}{m}{n}{ <-7> euex7 <8> euex8 <9> euex9 <10->
  euex10}{} 
\DeclareSymbolFont{eusym}{U}{euex}{m}{n}
\DeclareSymbolFontAlphabet{\eusymb}{eusym}
\DeclareMathSymbol{\intop}{\mathop}{eusym}{"52}
    \def\int{\intop\nolimits}
\DeclareMathSymbol{\produit}{\mathop}{eusym}{"51}
	\def\prod{\produit\nolimits}
\DeclareMathSymbol{\ointop}{\mathop}{eusym}{"48}
\DeclareMathSymbol{A}{\mathalpha}{operators}{`A}
\DeclareMathSymbol{B}{\mathalpha}{operators}{`B}
\DeclareMathSymbol{C}{\mathalpha}{operators}{`C}
\DeclareMathSymbol{D}{\mathalpha}{operators}{`D}
\DeclareMathSymbol{E}{\mathalpha}{operators}{`E}
\DeclareMathSymbol{F}{\mathalpha}{operators}{`F}
\DeclareMathSymbol{G}{\mathalpha}{operators}{`G}
\DeclareMathSymbol{H}{\mathalpha}{operators}{`H}
\DeclareMathSymbol{I}{\mathalpha}{operators}{`I}
\DeclareMathSymbol{J}{\mathalpha}{operators}{`J}
\DeclareMathSymbol{K}{\mathalpha}{operators}{`K}
\DeclareMathSymbol{L}{\mathalpha}{operators}{`L}
\DeclareMathSymbol{M}{\mathalpha}{operators}{`M}
\DeclareMathSymbol{N}{\mathalpha}{operators}{`N}
\DeclareMathSymbol{O}{\mathalpha}{operators}{`O}
\DeclareMathSymbol{P}{\mathalpha}{operators}{`P}
\DeclareMathSymbol{Q}{\mathalpha}{operators}{`Q}
\DeclareMathSymbol{R}{\mathalpha}{operators}{`R}
\DeclareMathSymbol{S}{\mathalpha}{operators}{`S}
\DeclareMathSymbol{T}{\mathalpha}{operators}{`T}
\DeclareMathSymbol{U}{\mathalpha}{operators}{`U}
\DeclareMathSymbol{V}{\mathalpha}{operators}{`V}
\DeclareMathSymbol{W}{\mathalpha}{operators}{`W}
\DeclareMathSymbol{X}{\mathalpha}{operators}{`X}
\DeclareMathSymbol{Y}{\mathalpha}{operators}{`Y}
\DeclareMathSymbol{Z}{\mathalpha}{operators}{`Z}
\DeclareMathSymbol{e}{\mathalpha}{operators}{`e}
\def\psh{plurisousharmonique \def\psh{psh}}
\newcommand{\Div}{{\rm Div}}
\renewcommand{\epsilon}{\varepsilon}
\renewcommand{\imath}{{\rm i}}
\newcommand{\lcm}{\operatorname{lcm}}
\renewcommand{\phi}{\varphi}
\newcommand{\build}[3]{\mathrel{\mathop{\kern 0pt #1}\limits_{#2}^{#3}}}
\newcommand{\reduce}[1]{\ensuremath{\widetilde{#1}}}
\newcommand{\ceil}[1]{\ensuremath{\lceil{#1}\rceil}}
\begin{document}
\maketitle
\begin{abstract}
The aim of this note is to give an effective criterion to verify
whether a cubic polynomial over a non-Archimedean field has
a weak N\'{e}ron model or not. 
\end{abstract}

\section{Introduction}

Let $V$ be smooth  variety defined over a discretely valued
non-Archimedean field $\KK$ 
and let $\phi:V\longrightarrow V$ be a morphism on $V$. Assume that
there exists a divisor $E \in \Div(V)\otimes \RR$ and a real number
$\alpha >1$ such that $\phi^{\ast}(E)$ is linear equivalent to
$\alpha E$, Call and Silverman~\cite{CS} showed that there exists a
Weil local height 
function $\hat{\lambda}_{V,E,\phi}$ that plays a role  
analogous to the  N\'{e}ron-Tate local height function on an Abelian
variety. In the case of Abelian varieties, 
the  N\'{e}ron-Tate local height can be 
computed using intersection theory on the N\'{e}ron model
(see~\cite{N, BLR})  of an  Abelian variety in question.
This gives rise to a motivation 
to define an analogous notion for a variety $V$ to which a morphism
$\phi:V\longrightarrow V$ is attached. 
 Such a generalization was proposed in the same paper~\cite{CS} by the
 authors and gives rise to the notion of {\em weak
  N\'{e}ron model} of the couple $(V/\KK,\phi)$ over the ring of integers of
$\KK$ (see also~\cite{BLR}, p.~73 sq.~for
an alternative definition which has nothing to do with the setting
discussed here). They showed that indeed if the pair  $(V/\KK,\phi)$ has a
weak N\'{e}ron model, then the local height
$\hat{\lambda}_{V,E,\phi}$ can also be computed using intersection
theory on the model.  

However, in general a weak N\'{e}ron model for a given pair
$(V/\KK, \phi)$ may not exist. In~\cite{H}, Hsia showed that for a rational map
$\phi:\PP^1\longrightarrow\PP^1$ over $\KK$, the existence of a weak
N\'{e}ron model is closely related to dynamical properties of $\phi$ and more
precisely to the presence of points of the Julia set of $\phi$ inside
$\PP^1(\KK)$. This leads to the question on whether or not one can
effectively determine the existence of a weak N\'{e}ron model for a
given pair $(\PP^1/\KK, \phi)$. In the case of elliptic curves (one
dimensional Abelian variety), the Tate's Algorithm~\cite{Ta} computes, among
other things,  the reduction type of an elliptic curves given by a
Weierstrass equation. Analogous to the situation of elliptic curves,
the question we raised here can be viewed as a 
search for algorithm determining the existence of the weak N\'{e}ron
model for a given pair $(\PP^1/\KK, \phi)$   and computing  
the model when it exists. However, 
for general rational maps $\phi$ on $\PP^1/\KK$, it does not seem clear 
that such  an effective algorithm exists.  On the other
hand, in the case of polynomial maps we think it might be plausible to
have optimistic expectation (see Question~\ref{quest:effectiveness}).   
The aim of this paper is to give a positive answer to this question
for cubic polynomial maps. 

As mentioned above, whether or not a weak
N\'{e}ron model exists for a pair $(\PP^1/\KK,\phi)$ is closely
related to the dynamics induced by the 
action of the given morphism $\phi$ on $\PP^1(\KK)$.
In this note, we consider the dynamics of cubic polynomial
maps over the non-Archimedean field $\KK$. In the classical
theory of dynamical systems, 
dynamical properties of cubic polynomials over $\CC$ have received a
lot of attention since the pioneering work of Bodil Branner and John
Hamal Hubbard in~\cite{BHu}. To understand the parameter space of those
polynomials one is naturally led to study the dynamics of cubic
polynomials over the field of Puiseux series over $\CC$ (or
$\overline{\QQ}$) and this has been carried out by Jan Kiwi in~\cite{K}.

Although the theory of
non-Archimedean dynamical systems has 
its origin in the study of arithmetic problems, Kiwi's work shows that 
non-Archimedean dynamics can be of 
great value in understanding complex dynamics as well.
On the other hand,  over a 
discrete valued field it seems possible that one can
describe every dynamics occurring for a cubic polynomial even though
examples show that they can be very complicated. 
We will be concerned with the determination of the {\em $\KK$-rational
  Julia set} (see Section~2) associated to a given cubic
polynomial $ \phi(z)\in \KK[z]$. As a consequence of our main
result (Theorem~\ref{thm:main2}), we show that  the non-emptiness of the
$\KK$-rational Julia set of $\phi$ 
is closely related to the existence of a $\KK$-rational repelling fixed
point of $\phi$ (see Theorem~\ref{thm:main1}).

The plan of the paper is as follows. In Section~\ref{sec:weak Neron
  model} we recall the definition of a weak N\'{e}ron model for a
given pair $(V/\KK, \phi)$ where we restrict ourselves to the case
$V = \PP^1$. Then, we
state our main results
(Theorem~\ref{thm:main2}~and~Theorem~\ref{thm:main1}).   
Section~\ref{sec:proof} is devoted to the proof of our main
result. In Section~\ref{sec:counterexample} we give examples of
polynomials maps with degree higher than three for which
Theorem~\ref{thm:main1} does not hold.


\section{Weak N\'{e}ron models and Julia sets}
\label{sec:weak Neron model}

Let us introduce some basic notations~:~$\KK$ will denote a field
endowed with a non-Archimedean discrete valuation $v$, which will be
assumed to have $\ZZ$ as value group. We will furthermore assume that
$(\KK,v)$ is  henselian (see~\cite{E} for instance). It is
a well known fact that  for any algebraic extension $\LL$ of $\KK$ the
valuation $v$ has a unique extension to a valuation on $\LL$ (and thus
that the Galois group acts by isometries). We'll use the same notation
$v$ to denote the extension of the valuation to  algebraic closure
of $\KK.$ We will denote by
$\cO_{\KK}$ the valuation ring of  $\KK$ and by $\cM_{\KK}$ its unique
maximal ideal. The residue field is then $\kk=\cO_{\KK}/\cM_{\KK}$.  
We fix a uniformizer $\pi$ of $\KK$
so that $v(\pi)=1$ and we endow $\KK$ with an absolute value $|\cdot|$
associated to $v$ so that $|\pi|<1$. 

Let $\phi\in\KK(z)$ be a rational map. As a map, it acts on
$\PP^1(\KK)$ as well as on $\PP^1(\CC_v)$, where $\CC_v$ is the
completion (with respect to the unique extension of $v$) of an
algebraic closure of $\KK$. On the latter, one can define \emph{the
  Julia set of $\phi$,\/} which we denote by $J_{\phi}$, defined as
the set of points around which the family of iterates of $\phi$ is not
equicontinuous with respect to the chordal metric on $\PP_1(\CC_v)$,
see~\cite{H}. In contrast with the complex case, the Julia set may be
empty, for instance if the polynomial $\phi$ has \emph{good
reduction} as defined in~\cite{MS} (see \S~2.1 for a definition). 
Furthermore, as usually $\KK$ is
far from being algebraically closed, it is often the case that
$J_{\phi}$ is non-empty (and even rather large) while the
$\KK$-rational Julia set $J_{\phi}(\KK):= J_{\phi}\cap\PP^1(\KK)$
is itself empty, meaning that all the complicated dynamical
behavior of $\phi$ takes place outside of $\KK$.

\subsection{Reduction of a morphism and Weak N\'{e}ron model  }
\label{subsec:wnm}
One advantage of working over a non-Archimedean field is that one can
reduce maps modulo the maximal ideal $\cM_{\KK}.$ 
Fixing 
homogeneous coordinates $[x,y]$ on $\PP^1$ over $\KK,$ we may write a
rational map  on $\PP^1$ as 
$\phi([x,y]) = [f(x,y), g(x,y)]$ where $f, g \in \cO_{\KK}[x,y]$ are
homogeneous polynomials in $x,y$
without common divisor.  
Multiplying both homogeneous coordinates by an appropriate  $\lambda\in \KK^{\ast},$ we may
further assume that some coefficient of $f, g$ is a unit of
$\cO_{\KK}.$  Let  $\reduce{\phi} =[\reduce{f}, \reduce{g}],$ where  
$\reduce{f}, \reduce{g}$ denote the reductions of the polynomials $f$
and $g$ by reducing their coefficients modulo the maximal ideal
$\cM_{\KK}$ respectively.  We say that $\phi$ has good reduction
(with respect to the coordinate $[x,y]$) if $\deg(\reduce{\phi}) =
\deg(\phi)$ when $\reduce{\phi}$ is viewed as a morphism on $\PP^1$
over the residue field  $\kk$ (c.f.~\cite[\S~4]{MS}).  In the sequel, we'll
say that a morphism $\phi : \PP^1 \to \PP^1$ over $\KK$ has good
reduction if there exists a possible change of coordinate over $\KK$
such that $\phi$ has good reduction with respect to the new system of
coordinates. Moreover, $\phi$ is said to have {\it
  potential good reduction} if there exists a finite extension $\LL$
over $\KK$ so that after a change of coordinates over $\LL$ the given
rational map $\phi$ has good reduction with respect to the new system
of coordinates.

Even though $\phi$ does not have good reduction, it is possible to
define the reduction of $\phi$ by considering a \emph{weak
  N\'{e}ron model} of $\phi$ 
which is a dynamical analogue of a N\'{e}ron model for an abelian
variety over $\KK.$ As in the arithmetic theory of abelian varieties,  
the notion of  weak N\'{e}ron model of a morphism 
(first introduced by Call and Silverman in~\cite{CS}) is made to study
 the canonical (local) height associated to a morphism $\phi :
V \to V$ on  a smooth, projective variety $V/\KK.$   In the
following, we recall the definition of a weak N\'{e}ron model by
restricting ourselves to the case of rational maps on $\PP^1$ over
$\KK.$ We refer the readers to the paper~\cite{CS} for a general definition.


Let $S = \Spec(\cO_{\KK})$.  We say 
that $(\PP^1/\KK,\phi)$ admits a weak N\'{e}ron model over $S$ if there
exists a smooth, separated scheme $\cV$ of finite type  over $S$
together with a morphism $\Phi:\cV/S\longrightarrow \cV/S$ such that
the following conditions hold~:~
\smallskip
\\
$\phantom{\sum}$
(i) The generic fiber  of $\cV/S$ is isomorphic to $\PP^1$ over $\KK,$
\\
$\phantom{\sum}$
(ii) every point $P\in \PP^1(\KK)$ extends to a section
$\overline{P}:S\longrightarrow \cV,$ 
\\
$\phantom{\sum}$
(iii) the restriction
of $\Phi$ to the generic fiber of $\cV$ is exactly $\phi$. 
\smallskip
\\
To fix the terminologies, we say that a separated scheme $\cV/S$ is a model
of $\PP^1$ (over $S$) if it satisfies~(i); if furthermore it
satisfies~(ii) we say that   it has the extension property for
\'{e}tale points.   
The most basic example of a map admitting a weak N\'eron model is that of a rational map having good reduction, in which case we can take $\cV=\PP^1_S$. 

In general, one can not expect that a weak N\'{e}ron model always
exists for arbitrary rational map $\phi.$ 
The link between the two notions of Julia set and weak N\'{e}ron model was
made by Hsia in~\cite{H} who proved that if the $\KK$-rational Julia set is
non-empty, $(\PP^1/\KK,\phi)$ does not admit any weak N\'{e}ron
model. It
gives thus an easy way of constructing maps without weak N\'{e}ron model,
by choosing one with a sufficiently complicated dynamics, or even just
one with a $\KK$-rational repelling fixed point. 
On the other hand,
some interesting families of rational maps, like Latt\`{e}s maps arising
from isogenies on elliptic curves, have been proven to admit weak
N\'{e}ron models (see~\cite{BH1, BH2}). 
If $\phi$ is a polynomial, by~\cite[Theorem~4.3~and~4.8]{H} we have a better
result~:~$(\PP^1/\KK,\phi)$ admits a weak N\'{e}ron model if and only if
the $\KK$-rational Julia set is empty. Thus, in practice,
one can determine  whether the $\KK$-rational Julia set of a
given polynomial map $\phi$ is empty or not, although it 
can be very complicated. 


\subsection{Statement of the main result}
\label{subsec:statement}
Before we state our main result, let us recall that a fixed point
$z\in\CC_v$ is a point such that $\phi(z)=z$ 
and that $z$ is repelling if $v(\phi'(z))<0$ (equivalently
$|\phi'(z)| > 1$).

\begin{theorem}
  \label{thm:main2}
  Let $\phi\in\KK[z]$ be a polynomial of degree three. Then
  $(\PP^1/\KK,\phi)$ admits a weak N\'{e}ron model if and only if $\phi$
  admits no $\KK$-rational repelling fixed point. 
\end{theorem}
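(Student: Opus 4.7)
The strategy is to leverage Hsia's characterization for polynomial maps ([H, Thms.~4.3 and~4.8]): the pair $(\PP^1/\KK,\phi)$ admits a weak N\'eron model if and only if $J_\phi(\KK) = \emptyset$. The ``only if'' half of Theorem~\ref{thm:main2} is then immediate, since a $\KK$-rational repelling fixed point lies in $J_\phi(\KK)$. The substantive content is the converse: if $\phi$ has no $\KK$-rational repelling fixed point, one must show $J_\phi(\KK) = \emptyset$.

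For this I would first normalize: by an affine conjugation over $\KK$, $\phi$ may be assumed monic and of the form $\phi(z) = z^3 + pz + q$, which preserves the set of $\KK$-rational fixed points together with their multipliers. If $\phi$ has potential good reduction then $J_\phi = \emptyset$ outright, so we may assume genuinely bad reduction. In that case I would read off the valuations of the three finite fixed points $\alpha_1,\alpha_2,\alpha_3 \in \overline{\KK}$ from the Newton polygon of $\phi(z)-z$, and study their multipliers $\lambda_i = \phi'(\alpha_i)$. The Galois group of $\overline{\KK}/\KK$ preserves multipliers, so the hypothesis translates to $|\lambda_i| \le 1$ for every $\KK$-rational $\alpha_i$; the remaining fixed points either come as a Galois-conjugate pair (with the third fixed point $\KK$-rational) or form a Galois-stable triple over a cubic extension, and repelling behaviour among non-rational points does not \emph{a priori} contribute to $J_\phi(\KK)$.

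The core step is then a disc decomposition argument: one covers $\PP^1(\KK)$ by the non-Archimedean basin of $\infty$, the basins of the attracting $\KK$-rational fixed points, equicontinuity neighbourhoods of the $\KK$-rational indifferent fixed points (where the classical non-Archimedean linearization theorem provides an invariant disc on which iterates act as isometries), and Galois-invariant neighbourhoods of the non-$\KK$-rational conjugate orbits. Showing that these cover $\PP^1(\KK)$ and that the iterates of $\phi$ are equicontinuous on each piece yields $J_\phi(\KK) = \emptyset$.

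The main obstacle I anticipate is twofold. First, controlling the indifferent cases $|\lambda_i|=1$, in particular the parabolic case $\lambda_i = 1$ where no linearization is available, requires producing by hand an invariant disc that absorbs all $\KK$-rational orbits falling toward $\alpha_i$. Second, and more subtly, one must rule out a $\KK$-rational \emph{periodic} repelling orbit of period $\ge 2$ under the assumption that no $\KK$-rational fixed point is repelling. For degree three this should follow from a finite case analysis combining the multiplier identity $\sum_{i=1}^{3}(1-\lambda_i)^{-1} = 0$ summed over the finite fixed points with an explicit Newton-polygon computation for $\phi^{(n)}(z)-z$; the counterexamples of Section~\ref{sec:counterexample} for degree $\ge 4$ serve as a warning that this step is genuinely special to the cubic setting and must exploit the small number of fixed points rather than a general dynamical principle.
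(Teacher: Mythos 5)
Your framework is sound as far as it goes: invoking Hsia's equivalence (weak N\'eron model for a polynomial $\Leftrightarrow$ $J_\phi(\KK)=\emptyset$) and disposing of the ``only if'' direction via the repelling fixed point is exactly how the easy half works. But the converse, which is the entire content of the theorem, is not proved in your proposal --- it is outlined as a program whose two hardest components you explicitly defer as ``anticipated obstacles.'' Ruling out $\KK$-rational repelling orbits of period $\ge 2$ (and, more generally, establishing equicontinuity at $\KK$-points whose orbits never enter an attracting basin or a linearization disc) is precisely where the degree-three hypothesis must enter; your suggestion of ``a finite case analysis'' on $\phi^{(n)}(z)-z$ ranges over all $n$ with no mechanism for making it finite, and the multiplier identity $\sum(1-\lambda_i)^{-1}=0$ only constrains fixed points, not higher-period cycles. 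A smaller but genuine error: the normalization to a depressed monic cubic $z^3+pz+q$ requires conjugating by a dilation $z\mapsto\mu z$ with $\mu^2$ equal to the reciprocal of the leading coefficient, which in general forces a quadratic extension of $\KK$ and so does \emph{not} preserve $\KK$-rationality of fixed points. The paper only ever conjugates by translations and by dilations $z\mapsto\pi^k z$, precisely to stay over $\KK$ (and over unramified extensions, under which the weak N\'eron model property is stable).

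The paper's route is quite different and avoids all of these difficulties: it never analyzes $J_\phi(\KK)$ dynamically, but instead runs Hsia's blowup algorithm and shows it terminates after at most one dilatation whenever no $\KK$-rational repelling fixed point exists. The case split is on whether $g(z)=\phi(z)-z$ is irreducible over $\KK$. In the irreducible case (no $\KK$-rational fixed point at all), the fact that the three roots of $g$ share a common non-integral valuation over the strict henselization pins down the Newton polygon of $g$ tightly enough that a single explicit dilatation at $\reduce{z}=0$ yields a morphism. In the reducible case, with the fixed point moved to $0$ and multiplier $\lambda$, either $v(\lambda)<0$ (repelling, done), or the invariant $\nu=v(a_3)/2-v(a_2)$ decides everything: $\nu\le 0$ gives potential good reduction and an explicit model after one blowup at $\reduce{z}=\infty$, while $\nu>0$ forces, via Hensel's lemma, all three fixed points to be $\KK$-rational with the largest one repelling of multiplier valuation $-2\nu$. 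Once the model exists, equicontinuity on $\PP^1(\KK)$ --- and hence the non-existence of higher-period $\KK$-rational repelling orbits --- comes for free from Hsia's Theorem~3.3, so the issue you flag as your second obstacle never has to be confronted directly. If you want to salvage your approach, you would essentially have to rediscover this trichotomy anyway; I would recommend reorganizing around the irreducible/reducible dichotomy and the invariant $\nu$ rather than around basins and linearization discs.
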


\begin{remark}
It follows from~\cite[Theorem~3.1]{H} that $(\PP^1/\KK,\phi)$ does not
admit a weak N\'{e}ron model if there exists a repelling periodic
point for $\phi$. A repelling periodic point
 is necessarily in the Julia set of $\phi$, so that the $\KK$-rational
 Julia set of $\phi$ is non-empty  if $\phi$ admits a
$\KK$-rational repelling fixed point.
The converse is not {\it a priori} clear as there
could well exist some high period repelling points in $J_\phi(\KK)$ 
that could be hard to detect due to the high degree of the equation
defining them.
\end{remark}

Also, notice that  computing or even
determining non-emptiness of the $\KK$-rational Julia set is often
difficult in general (although it has been carried out
completely in the quadratic case in~\cite{BBP}) but for cubic
polynomials  we prove  
the following theorem giving an easily verifiable criteria for
determining the existence of $\KK$-rational Julia set. 

\begin{theorem}
  \label{thm:main1}
With notations as above, let $\phi\in\KK[z]$ be a
polynomial of degree three. If the $\KK$-rational Julia set 
$J_\phi(\KK)$ of $\phi$ is non-empty then it contains a repelling fixed
point of $\phi.$
\end{theorem}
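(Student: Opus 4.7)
The plan is to derive Theorem~\ref{thm:main1} from Theorem~\ref{thm:main2} combined with Hsia's criterion recalled at the end of Section~\ref{subsec:wnm}: for a polynomial $\phi \in \KK[z]$, \cite[Theorems~4.3 and 4.8]{H} asserts that $(\PP^1/\KK, \phi)$ admits a weak N\'eron model if and only if $J_\phi(\KK) = \emptyset$.

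Granted this, the argument is short. Suppose $J_\phi(\KK) \neq \emptyset$. By the polynomial form of Hsia's criterion, the pair $(\PP^1/\KK, \phi)$ does not admit a weak N\'eron model, so Theorem~\ref{thm:main2} supplies a $\KK$-rational repelling fixed point $z_0$ of $\phi$. Since $(\phi^n)'(z_0) = \phi'(z_0)^n$ by the chain rule at a fixed point, we have $|(\phi^n)'(z_0)| = |\phi'(z_0)|^n \to \infty$, and hence the iterates of $\phi$ fail to be equicontinuous with respect to the chordal metric on any neighborhood of $z_0$; thus $z_0 \in J_\phi$, and since $z_0 \in \KK$ this gives $z_0 \in J_\phi(\KK)$, as desired.

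So essentially all of the mathematical content of Theorem~\ref{thm:main1} is already encoded in Theorem~\ref{thm:main2}, and the deduction is a purely formal packaging. The only delicate point is to invoke the polynomial-specific form of Hsia's theorem, which provides the full equivalence rather than merely the direction saying that $J_\phi(\KK) \neq \emptyset$ obstructs a weak N\'eron model. Any attempt at a genuinely direct proof of Theorem~\ref{thm:main1} would have to re-do essentially the case analysis underlying Theorem~\ref{thm:main2} (normalization to a depressed cubic $\phi(z) = az^3 + bz + c$, inspection of the Newton polygon of $\phi(z) - z$, and analysis of $\phi$ on invariant disks); the main obstacle in such a direct approach is the case where the three finite fixed points of $\phi$ are Galois-conjugate in a cubic extension of $\KK$, since one then has to exclude the a priori possibility that $J_\phi(\KK)$ is nonempty by virtue of some high-period repelling periodic point while no $\KK$-rational fixed point is repelling.
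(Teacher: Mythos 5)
Your proof is correct and is essentially the paper's own argument: the authors likewise deduce Theorem~\ref{thm:main1} formally from Theorem~\ref{thm:main2} together with Hsia's link between weak N\'eron models and $\KK$-rational Julia sets, only phrased contrapositively (no repelling fixed point $\Rightarrow$ weak N\'eron model $\Rightarrow$ equicontinuity of the iterates on $\PP^1(\KK)$ by \cite[Theorem~3.3]{H} $\Rightarrow$ $J_\phi(\KK)=\emptyset$). One small remark: for your direction you only need the implication ``$J_\phi(\KK)\neq\emptyset$ obstructs a weak N\'eron model,'' which holds for all rational maps, not the polynomial-specific full equivalence you flag as the delicate point.
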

As a direct consequence of Theorem~\ref{thm:main1}, we have the following.

\begin{corollary}
  \label{cor:fixpt}
  If all  $\KK$-rational fixed points are non-repelling, then
  all periodic points in $\KK$ are non-repelling. 
  \end{corollary}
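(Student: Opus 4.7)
The plan is to argue by contradiction, reducing the statement for all periods to a statement about fixed points via Theorem~\ref{thm:main1}. Assume that $\phi\in\KK[z]$ is cubic and that there exists a $\KK$-rational periodic point $z_0$ of exact period $n\geq 1$ which is repelling, i.e.\ $|(\phi^n)'(z_0)|>1$. The goal is to produce a $\KK$-rational repelling fixed point of $\phi$ itself, which contradicts the hypothesis.

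The first step is to observe that every repelling periodic point of $\phi$ lies in the Julia set $J_\phi$. This is the standard fact already used in the Remark following Theorem~\ref{thm:main2}, and is proved in~\cite{H}: near such a point the derivatives $(\phi^{kn})'(z_0)$ grow without bound in absolute value, preventing equicontinuity of the family of iterates in any neighborhood of $z_0$. Since $z_0\in\KK$ by assumption, we conclude that $z_0\in J_\phi\cap\PP^1(\KK)=J_\phi(\KK)$, so in particular the $\KK$-rational Julia set of $\phi$ is non-empty.

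At this point I would apply Theorem~\ref{thm:main1} directly to $\phi$: since $\phi$ is a cubic polynomial and $J_\phi(\KK)\neq\emptyset$, the theorem asserts that $J_\phi(\KK)$ contains a repelling \emph{fixed} point of $\phi$. This fixed point is $\KK$-rational and repelling, which is exactly what contradicts the hypothesis that all $\KK$-rational fixed points of $\phi$ are non-repelling. This completes the proof.

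There is essentially no obstacle here beyond invoking Theorem~\ref{thm:main1}; the only subtlety worth remarking is that the argument does not require us to control the period $n$ of $z_0$ at all. The mechanism by which one passes from a repelling periodic point of high period to a repelling fixed point is not through iterating arguments on $\phi^n$ (which would leave the cubic setting), but rather through the dynamical dichotomy encoded in Theorem~\ref{thm:main1}, which is insensitive to the period and only uses the non-emptiness of $J_\phi(\KK)$.
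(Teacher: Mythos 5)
Your argument is correct and is exactly the intended one: the paper states the corollary as a direct consequence of Theorem~\ref{thm:main1}, and your proof simply spells out that consequence (a $\KK$-rational repelling periodic point lies in $J_\phi(\KK)$, so Theorem~\ref{thm:main1} yields a $\KK$-rational repelling fixed point, contradicting the hypothesis). No gaps; this matches the paper's approach.
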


We fist show that Theorem~\ref{thm:main2} implies
Theorem~\ref{thm:main1}.

\begin{proof}[Proof of Theorem~\ref{thm:main1}]
  As a repelling fixed point is necessarily in the Julia set, we see
  that one direction of the implication is clear. So, let's assume that
  there is no $\KK$-rational repelling fixed point of $\phi$. By
  Theorem~\ref{thm:main2}, $(\PP^1/\KK,\phi)$ admits a weak N\'{e}ron
  model over $S$.  We know that if
  $(\PP^1/\KK,\phi)$ admits a weak  N\'{e}ron model over $S$,  then by
  Theorem~3.3 of~\cite{H}, the family of morphisms
  $\{\phi^i\}_{i=0}^\infty$ is equicontinuous on $\PP^1(\KK)$.
  It follows that the rational Julia set $J_\phi(K)$ is empty as desired. 
\end{proof}

The proof of theorem \ref{thm:main2} will have the following simple corollary :
\begin{corollary}
  \label{cor:fixpt2}
  Let $\phi\in\KK[z]$ be a polynomial of degree three. 
If all the fixed points of $\phi$ (in $\CC_v$) are non-repelling, then
$J_\phi$ is  empty. 
\end{corollary}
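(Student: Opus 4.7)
The plan is to reduce Corollary~\ref{cor:fixpt2} to Theorems~\ref{thm:main2} and~\ref{thm:main1} applied over a suitable finite extension, and then to invoke the density of repelling periodic points in the Julia set.

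First I would pass to a finite extension $\LL/\KK$ containing the (at most three) fixed points of $\phi$ in $\CC_v$. After rescaling the valuation so that its value group is $\ZZ$, $\LL$ is again a henselian discretely valued non-Archimedean field, and $\phi\in\LL[z]$ is still a cubic polynomial. Since by hypothesis every fixed point of $\phi$ in $\CC_v$ is non-repelling, $\phi$ admits no $\LL$-rational repelling fixed point. Theorem~\ref{thm:main2} applied over $\LL$ therefore supplies a weak N\'{e}ron model for $(\PP^1/\LL,\phi)$ over $\Spec(\cO_{\LL})$, and, as in the proof of Theorem~\ref{thm:main1} given above (which uses only Theorem~3.3 of~\cite{H}), this forces $J_\phi(\LL)=\emptyset$.

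Next, I would promote this to an arbitrary algebraic point. For any finite extension $\LL'/\KK$, pick a finite extension $\LL''$ containing both $\LL$ and $\LL'$. Repeating the previous step with $\LL''$ in place of $\LL$ gives $J_\phi(\LL'')=\emptyset$, whence $J_\phi(\LL')=\emptyset$. Taking the union over all finite extensions,
\begin{equation*}
J_\phi\cap\PP^1(\overline{\KK})\;=\;\bigcup_{\LL'/\KK\ \text{finite}} J_\phi(\LL')\;=\;\emptyset .
\end{equation*}
In particular, $\phi$ has no repelling periodic point in $\CC_v$ at all, since any such point is algebraic over $\KK$.

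The final and most delicate step is to pass from $J_\phi\cap\PP^1(\overline{\KK})=\emptyset$ to the emptiness of the full Julia set $J_\phi\subseteq\PP^1(\CC_v)$; this is where I expect the main obstacle, because a priori $J_\phi$ could contain transcendental points. I would invoke the standard density theorem for non-Archimedean polynomial dynamics, namely that for $\phi\in\CC_v[z]$ of degree $\geq 2$ the Julia set $J_\phi$ equals the closure in $\PP^1(\CC_v)$ of the set of repelling periodic points of $\phi$. Since that set is empty by the previous step, $J_\phi$ is empty, completing the proof.
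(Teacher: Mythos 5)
Your first two steps are sound: passing to a finite extension $\LL$ of $\KK$ containing the fixed points, applying Theorem~\ref{thm:main2} over $\LL$, and then exhausting $\overline{\KK}$ by finite extensions does yield $J_\phi\cap\PP^1(\overline{\KK})=\emptyset$, and in particular that $\phi$ has no repelling periodic point in $\CC_v$. The gap is in your final step. The statement you invoke --- that for any polynomial of degree at least two over $\CC_v$ the Julia set equals the closure of the set of repelling periodic points --- is not a standard theorem; the unconditional density of repelling periodic points in the Julia set is precisely the hard open question of non-Archimedean dynamics. What is actually available (Theorem~3 of B\'ezivin~\cite{Bez}, quoted in Remark~(2) following the corollary) is conditional: \emph{if} $\phi$ already possesses a repelling periodic point, \emph{then} $J_\phi$ is the closure of the repelling periodic points. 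The paper's Remark~(2) derives the unconditional density statement for cubics as a \emph{consequence} of the present corollary, so invoking it to prove the corollary is circular. There is also a scope issue: B\'ezivin works over $\CC_p$, whereas $\CC_v$ here may be of equal characteristic (e.g.\ built from Laurent or Puiseux series). Finally, Hsia's weaker theorem that $J_\phi$ lies in the closure of the set of \emph{all} periodic points does not repair the argument: non-repelling periodic points lie in the Fatou set, but the Julia set could a priori still meet the closure of a subset of the Fatou set.

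The paper closes exactly this gap by a different and more direct route. After adjoining the fixed points one is in the reducible case of \S~\ref{subsec:reducible}; the hypothesis that all fixed points are non-repelling forces $\nu\le 0$, since $\nu>0$ was shown to produce the repelling fixed point $\zeta$; and Remark~\ref{remark:potential good} shows that $\nu\le 0$ implies \emph{potential good reduction}. Good reduction after a finite base change makes $\phi$ non-expanding for the chordal metric on all of $\PP^1(\CC_v)$, so the entire Julia set --- transcendental points included --- is empty. To keep your strategy you would have to replace the density theorem by an argument of this kind that controls $\phi$ on all of $\CC_v$, not merely on algebraic points.
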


\begin{remark}
{\em (1)} We note that Corollary~\ref{cor:fixpt2} is true only for
non-Archimedean dynamics as the example $\phi(z)=z^3+z$ shows~:~its
complex Julia set 
is non empty and it admits a lot of repelling periodic points,
although its only fixed point $0$ is non-repelling.  
\\
{\em (2)} Let us finally note that as a corollary to theorem 3 of B\'ezivin's article \cite{Bez} we get that  for a degree 3 polynomial with coefficients in ${\bf C}_p$ whose ${\bf C}_p$--Julia set is non empty then the Julia set is equal to the closure of the set of repelling periodic points.
\end{remark}

The remaining part of this note is devoted to proving
Theorem~\ref{thm:main2}. We present our proof in the the next
section. In the final section, we give a counterexample of 
Theorem~\ref{thm:main1} and pose a question.

\section{Proof of the main result}
\label{sec:proof}

\subsection{Preliminaries}
\label{subsec:preliminary}
As the theorem is stated in terms of fixed points, we let
$\phi(z)=g(z)+z$ so $g(z)=0$ is the fixed points equation. We will
split the proof in two parts, according to whether $\phi$ admits a
fixed point in $\KK$ or not, in which case the polynomial $g$ is
irreducible over $\KK$. Before dealing with the proof {\it per se,} we
can make three remarks~:~ 
\begin{enumerate}
\item it is not hard to see, from the definition of a weak N\'{e}ron
  model, that if $(\PP^1/\KK,\phi)$ admits a weak N\'{e}ron model, then
  $(\PP^1/\LL,\phi)$ admits one also, where $\LL$ is any unramified
  algebraic extension of $\KK$; 
\item the property of having a weak N\'{e}ron model over $\KK$ or not is
  invariant by conjugacy under a M\"{o}bius map with coefficients in
  $\KK$~:~$(\PP^1/\KK,\phi)$ admits a weak N\'{e}ron model if and only if
  $(\PP^1/\KK, f\circ \phi\circ f^{-1})$ admits one, for one and hence
  all $f\in PGL(2,\KK)$. 
\end{enumerate} 
We can thus assume that $\KK$ is strictly henselian and in
particular that the residue field $\kk$ is algebraically closed. 

We will use the method setup in~\cite{H} to construct weak N\'{e}ron model for
cubic polynomials.  For the convenience of the reader, we sketch it briefly. 
Let $\phi :\PP^1\to \PP^1$ be a give morphism over $\KK$  and take $X_0 :=
\PP^1_S$.
Then $X_0$ is a proper (and smooth) model of
$\PP^1$ over $S$ and $X_0$ has the extension property for
\'{e}tale points by  the valuative criterion for properness
(see~\cite[pp.~95--105]{Ha}). We know that $\phi$ extends at least to
an $S$-rational map $\Phi_0 :\PP^1_S\dashrightarrow  \PP^1_S.$
Now, we proceed inductively. Suppose that we have a separated and smooth 
$S$-model $X_i$ of $\PP^1$ having extension property for \'{e}tale
points and an $S$-rational map
$\Phi_i : X_i \dashrightarrow X_i $ extending $\phi$ 
for  integer $i \ge 0.$  If $\Phi_i$ is an $S$-morphism, then
$(X_i, \Phi_i)$ is a weak N\'{e}ron model for $(\PP^1/K,
\phi).$ Otherwise, the set of points where $\Phi_i$ is not defined is of
codimension 2 in $X_i.$ Hence, 
there are only finitely many closed points on the special fiber
 of $X_i$ where $\Phi_i$ is not defined. We  eliminate the
 indeterminacies by blowing up the closed points where $\Phi_i$ is
 not defined and let  $Y_{i+1}$ be the resulting scheme. Then
 $Y_{i+1}$ is a separated $S$-model of $\PP^1$ and still 
 has the extension property for \'{e}tale points. 
Removing the singular  points of $Y_{i+1}$ yields a new scheme denoted
$X_{i+1}$ which is a smooth, separated  
$S$-model of $\PP^1$ having the extension property for \'{e}tale
points. We consider again the extension map $\Phi_{i+1}
:X_{i+1}\dashrightarrow X_{i+1}$  and test if  any new
indeterminacies occur.
In the case of polynomial maps, either the process
continues indefinitely, in which case the $\KK$-rational Julia set is
not empty, or there is an integer $n \ge 0$ such that the
extension $\Phi_n$ is an $S$-morphism and $(X_n, \Phi_n)$ is a weak
N\'{e}ron model for $(\PP^1/\KK, \phi).$ 

It is a standard fact that one one can eliminate the points of
indeterminacy of a rational map by blowing up a coherent sheaf of
ideals (see for example~\cite[Example~7.17.3]{Ha}). In the proof of
Theorem~\ref{thm:main2}, we shall perform explicit blowups. For that
purpose, we now describe more precisely how one performs blowups to eliminate
indeterminacies of a rational map $\phi$ on $\PP^1$. 
Specifically, let $X$ be a smooth $S$-model of $\PP^1$ having the
extension property for \'{e}tale points and let $S$-rational map
$\Phi : X \dashrightarrow X$ be the extension of $\phi.$ 
Note that as $X$ is a
smooth model of $\PP^1$, the irreducible components of  its special
fiber $\reduce{X}$ is isomorphic to the projective line $\PP_\kk^1$ over
$\kk$ with at most finitely many closed points removed. 
Each point of $\reduce{X}(\kk)$ can be lifted to a point $P \in \PP^1(\KK).$
Hence, we'll write closed point of $\reduce{X}$ as $\reduce{P}$
with $P \in \PP^1(\KK).$ 
Suppose that there's a closed point $\reduce{P}$ on some irreducible component
$Z$ of $\reduce{X}$  where  $\Phi$ is not defined. 
The closed point $\reduce{P}$ as a reduced
closed subscheme of $X$ is locally defined by ideal $\cI\subset
\cO_{\KK}[z]$ which is generated by $\pi$ and $z.$  
This means that $\reduce{P}$ in $\reduce{X}$ has
local coordinate $\reduce{z} = 0.$    
Let $X'\to X$ be the blowing-up of $\reduce{P}$ in $X.$ The
exceptional divisor in $X'$ is thus isomorphic to a projective line
$\PP^1$ over $\kk.$ Let $X_{\pi}'$ denote the subset of $X'$ defined
by the equation $z = \pi z'$ where $z'$ is a local coordinate in $X'.$ 
Following~\cite[\S~3.2]{BLR}, we call $X_{\pi}'$ the {\em dilatation} of
$\reduce{P}$ in $X$ (not to be confused with the term \emph{dilation}  which we shall use below to denote homotheties $z\mapsto \lambda z$). 

Note that $\Phi$ is defined
at the generic point $\eta_Z$ of $Z.$ It follows that its image
$\Phi(\eta_Z)$ is either a generic point or a closed point of some
irreducible component $W$ of $\reduce{X}.$
Let $w$ be a local coordinates 
in a neighborhood of
$\Phi(\eta_Z).$ Then, we may 
represent the rational map $\Phi : X \dashrightarrow X$ locally in terms of
the coordinates $z$ and $w$ so that 
\[
   \phi_w(z) := w\circ \phi(z) = \frac{f(z)}{g(z)}\;\; \text{with $f(z),
     g(z) \in \cO_{\KK}[z]$.} 
\]
Note that   $\reduce{\phi_w}(\reduce{z}) =
\reduce{f}(\reduce{z})/\reduce{g}(\reduce{z})$ represents the rational
map from $Z$ to $W$ over $\kk$. By assumption $\reduce{\phi_w}$ is
not defined at $\reduce{P}.$ It follows that  either $\reduce{f}$ and
$\reduce{g}$ have the common zero $\reduce{z} = 0$ or
$\reduce{\phi_w}(0)$ is equal to a point $\alpha \in W$ where $W$
meets with another component of $\reduce{X}.$ 
Notice that the dilatation $X_\pi'$ of
$\reduce{P}$ has integral points $X'_\pi(\cO_\KK)$ corresponding
bijectively to points of $\PP^1(\KK)$ with coordinate $|z| \le |\pi|.$
The extension of $\Phi$ to $X_\pi'$ amounts to replacing $z$ by $\pi
z'$ on $X_\pi'$. Then, we examine whether or not the extension $\Phi: X'
\dashrightarrow X'$ is well defined on the dilatation $X_\pi'$ until we
attain a model $\cX$ such that the extension $\Phi : \cX
\dashrightarrow \cX$ which we denote by $\Phi$ again,  is an
$S$-morphism.  

After these preliminaries, we are ready to give a proof of
Theorem~\ref{thm:main2}. We split our arguments into two parts
according to whether or not $g$ is irreducible over $\KK.$ We fix an
affine coordinate $z$ on $\PP^1$ so that $\phi(z)$ is a polynomial of
degree 3.  We first deal with the irreducible case in
\S~\ref{subsec:irreducible} below then in \S~\ref{subsec:reducible} we
treat the remaining case and finish the proof. 

\subsection{The irreducible case}
\label{subsec:irreducible}

Let us begin with the case where $g$ is irreducible over $\KK$, that is
when $\phi$ admits no $\KK$-rational fixed point. In this case, we
need to show that $(\PP^1/K, \phi)$ has a weak N\'{e}ron model. 
Conjugating $\phi$ by the dilation $z\mapsto z/\pi^s$
and by taking $s$ large enough we may assume that  $\phi$ is of the following
form (without changing notation for the conjugated map)~:~ 
\[
\phi(z)=\frac{1}{\pi^n} f(z),
\]
with $f(z)=uz^3+a_1z^2+a_2z+a_3\in\cO_{\KK}[z]$, $|u|=1$. 
If $n=0$ then the polynomial has good reduction and
thus it has a weak N\'{e}ron model. Let's assume $n\ge 1$ from now
on.

The reduction  $\reduce{f}$ of $f$ must split over $\kk$ since $\kk$
is algebraically closed. 
Recall that $g(z) = \phi(z) - z$. From this
we get that $g(z)=\pi^{-n} f^*(z)$ where $f^*(z)=f(z)-\pi^n z$
satisfying $\tilde{f^*}=\tilde{f}$. If 
$\tilde{f}$ has a simple root in $\kk$  then so does $\tilde{f^*}$
which, by Hensel's lemma, ensures that $f$ has a fixed point in $\KK$,
which is not the case by hypothesis. We thus have that
$\tilde{f}(z)=\tilde{u}(z-\tilde{\alpha})^3$. Let $\alpha \in
\cO_\KK$ be any lift of $\tilde{\alpha}$~:~$f(\alpha)\equiv 0
\pmod{\pi}$. Conjugating $\phi$ by the translation $z\mapsto
z-\alpha$, we may assume that 
$\tilde{f}(z)=\tilde{u}z^3$ and hence $v(a_i)>0$
for $i=1,2,3$.
Let $f^*(z)=uz^3+a_1^*z^2+a_2^*z+a_3^*$  and $n_i=v(a_i^*)>0$ for
$i=1,2,3$.   

\noindent As a consequence of our normalizations,  the following are true.
\smallskip
\\
$\phantom{\sum}$
(i)  $n_i = v(a_i)$ for $i \ne 2$ and $n_2 = v(a_2 - \pi^n)=
\min\{v(a_2), n\},$ 
\smallskip
\\
$\phantom{\sum}$
(ii)  $n_3=3l+r$, with $r=1$ or $2$;\  $n_2\geq 2l+2r/3$ and $n_1\geq
l+r/3$. 
\smallskip
\\
As (i) is clear, we explain (ii). 
Notice that as $g$ is irreducible over $\KK$,  so is $f^*$.
It follows that every root of 
$f^{\ast}$ has the same valuation. Let $\alpha_i, i = 1, 2, 3$ be the
roots of $f^{\ast}$. Then,
$$
 n_3 = \sum_{i=1}^3 v(\alpha_i) = 3 v(\alpha) \quad \text{with
   $\alpha = \alpha_1$}. 
$$
By assumption  $\KK$ is strictly henselian.  It follows that
$v(\alpha)\not\in \ZZ$. Hence, $n_0$ is of the form as claimed in
(iii) and $v(\alpha_i) = l + r/3$ for $i = 1, 2, 3.$ 
Now, let's observe that
\begin{align*}
 a_1^{\ast} & = - u \sum_{i=1}^3 \alpha_i , \\
 a_2^{\ast} & =  u \sum_{1\le i < j \le 3} \alpha_i \alpha_j .
\end{align*}
Then the  inequalities satisfied by $n_1$ and $n_2$ follow by
applying the valuation $v$ on both sides and the strong triangle
inequality of $v$. 



Before we proceed further, we observe that if 
 $n \ge 2l+2r/3 > 2 l$ then we may  conjugate
$\phi$ by the dilation $z\mapsto \pi^l z$ and get (without changing
notation for $\phi$ conjugated)~:~ 
\[
\phi(z)=\frac{1}{\pi^{n-2l}}\left(uz^3+\pi^{-l}a_1 z^2+\pi^{-2l}a_2z
+\pi^{-3l}a_3\right). 
\]
Notice that the  polynomial $ \pi^{n-2l} \phi(z)$ has all the  coefficients
in $\cO_{\KK}$ and $v(\pi^{-3l}a_3)  = r$.  
So, after conjugation  we may assume that  $l=0$ and $n_3 = r = 1$ or
$2$. Since $n_2 \ge 2 r/3$ ($l = 0$), we easily check that $n_2\geq
n_3$ in this case.

If $ n  <2l+2r/3$, then conjugating $\phi$ by $z\mapsto \pi^k
z$ with $k = [n/2]$ gives~:~ 
\[
\phi(z)=\frac{1}{\pi^{n -
    2k}}\left(uz^3+\pi^{-k}a_1z^2+ \pi^{-2k}a_2 z+\pi^{-3k}a_3\right).  
\]
Similarly, $\pi^{n-2k}\phi(z)$ is a polynomial with coefficients in
$\cO_{\KK}$. If $n = 2 k$ is even, then 
$\deg \reduce{\phi} = \deg \phi$. Thus 
$\phi$ has good reduction in this case and it admits a weak N\'{e}ron
model. On the other hand, if $n$ is odd then $n - 2k =
1$. Hence, after conjugation, we may assume $n = 1$ in this case. 
In our discussion for the remaining case below, we
make further assumption that either (1) $l = 0,$ and $n \ge 2 r/3 $; or
(2) $n = 1 < 2 l + 2 r/3.$

Notice that, $\reduce{z} = 0$ is the only place where the extension of
$\phi$ on $\PP^1_S$ has indeterminacy.  As explained in
\S~\ref{subsec:preliminary}, we  
perform a blowup of the special fiber at $\reduce{z} = 0$ in
$\PP^1_S.$ Let $X_1\to \PP_S^1$ be the blowup and let $\cX$ be the smooth
locus of $X_1.$
\begin{proposition}
  \label{prop:irreducible}
  Let $\phi(z)$  be a cubic polynomial as above. Then, $\phi$ extends
  to a morphism $\Phi : \cX \to \cX$ over $S$ so that 
 $(\cX, \Phi)$ is a weak N\'{e}ron model for $(\PP^1/\KK, \phi)$.
\end{proposition}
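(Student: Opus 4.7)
The plan is to apply the blow-up algorithm from \S\ref{subsec:preliminary} in a single step. After the normalization of \S\ref{subsec:irreducible}, the reduction of the homogeneous form of $\phi$ is $[\reduce{u}\,x^3,0]$ on the special fibre of $\PP^1_S$, so the unique indeterminacy of the extension $\Phi_0$ lies at the closed point $\reduce{z}=0$. Blowing this point up produces $X_1$, whose special fibre consists of the exceptional divisor $E$ and the strict transform $\widetilde{\PP^1_\kk}$ meeting transversally at a single node; the smooth locus $\cX$ is obtained by removing this node. The proposition reduces to showing that the induced rational map $\Phi_1:\cX\dashrightarrow\cX$ has no remaining indeterminacies, so that $(\cX,\Phi_1)$ is a weak N\'{e}ron model.

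The special fibre of $\cX$ is covered by two affine patches convenient for this computation: the dilatation chart $\Spec\cO_\KK[z']$ (with $z=\pi z'$) covering $E$ minus the node, and the original chart $\Spec\cO_\KK[w]$ (with $w=1/z$) covering $\widetilde{\PP^1_\kk}$ minus the node. On the strict transform, I would express the image via
\[
w\circ\phi(z) \;=\; \frac{\pi^n\,w^3}{u + a_1 w + a_2 w^2 + a_3 w^3},
\]
which is a regular function of $(w,\pi)$ because the denominator is a unit ($u\in\cO_\KK^*$ and the remaining terms have positive valuation), and whose reduction sends the source to the $\infty$-point of $\widetilde{\PP^1_\kk}$; in particular the image lies in $\cX$ and avoids the node.

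On the dilatation chart, substitution yields
\[
\phi(\pi z') \;=\; u\pi^{3-n}z'^3 + a_1\pi^{2-n}z'^2 + a_2\pi^{1-n}z' + a_3\pi^{-n},
\]
and one selects a target coordinate according to the sub-case. In Case~(1) with $n=r$, this expression already lies in $\cO_\KK[z']$ with reduction equal to the constant $\reduce{a_3/\pi^n}\in\kk^*$, so $E$ is contracted to a single closed point on $\widetilde{\PP^1_\kk}$. In Case~(1) with $n>r$, one instead uses the target coordinate $w$: writing $b_3=a_3/\pi^r\in\cO_\KK^*$ one finds $w\circ\phi(\pi z') = \pi^{n-r}/(b_3+\pi\,h(z',\pi))$ for some $h\in\cO_\KK[z',\pi]$, regular with reduction $0$, contracting $E$ to the $\infty$-point of $\widetilde{\PP^1_\kk}$. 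In Case~(2), the near-cancellation $v(a_2-\pi)\geq 2l+2r/3$ forced by irreducibility makes $\phi(\pi z')/\pi\in\cO_\KK[z']$, whose reduction is an affine linear self-map of $E$. In every sub-case the image avoids the node (located at $\reduce{z'}=\infty$ on $E$), so $\Phi_1$ is defined on $E\setminus\{\text{node}\}$ with image in $\cX$.

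The principal obstacle is the case-by-case bookkeeping on the dilatation chart: each sub-case calls for a different target coordinate, and the required regularity depends on the package of valuation inequalities $n_3=3l+r$, $n_2\geq 2l+2r/3$, $n_1\geq l+r/3$ derived in \S\ref{subsec:irreducible}. These inequalities stem from the common valuation $l+r/3\notin\ZZ$ of the three roots of the irreducible $f^*$ (using that $\KK$ is strictly henselian), and they are precisely what allows the appropriate power of $\pi$ to be factored out of each coefficient contribution, so that a single blow-up suffices to resolve all indeterminacies of $\Phi_1$.
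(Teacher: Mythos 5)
Your proof is correct and follows essentially the same route as the paper: a single blow-up at the unique indeterminacy $\reduce{z}=0$, followed by the same case-by-case computation on the dilatation chart (image $\tilde{u'}$ or $\tilde\infty$ in Case (1) according as $n=n_3$ or $n>n_3$, and an affine self-map of the exceptional component in Case (2)), using the valuation inequalities $n_3=3l+r$, $n_2\ge 2l+2r/3$, $n_1\ge l+r/3$. The only difference is that you make explicit the (routine) verification on the strict-transform chart, which the paper leaves implicit in the observation that $\reduce{z}=0$ is the only indeterminacy of $\Phi_0$.
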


\begin{proof}
Let $X_{1,\pi}$ be the dilatation of $\reduce{z} = 0$. Then, on 
$X_{1,\pi}$  we may use affine coordinate
$z_1$ so that  $z=\pi z_1$ and on $X_{1,\pi}$ the polynomial map
$\phi$ can be represented by 
\[
\psi(z_1)=\phi(\pi z_1)=\frac{1}{\pi^n}(u\pi^3
z_1^3+a_1\pi^2z_1^2+a_2\pi z_1+\pi^{n_3}u'), 
\]
where $u'$ is a unit such that $a_3=\pi^{n_3}u'$. 

By our assumption above, we have either (1) $l = 0,$ and $n \ge 2 r/3 $; or
(2) $n = 1 < 2 l + 2 r/3$.  For case (1), we have $n_3 = 1 $ or $2$;
$n\geq n_2\geq n_3$ and $v(a_2) \ge n_2 \ge n_3.$ Then 
\[
\psi(z_1)=\frac{1}{\pi^{n-n_3}}\left(\pi^{3 - n_3} uz_1^3+\pi^{2 -  n_3}a_1 
z_1^2+\pi^{1-n_3} a_2 z_1+u'\right) =
\frac{1}{\pi^{n-n_3}} \psi_1(z_1)
\]
Note that $\psi_1(z_1) \equiv \reduce{u'}\pmod{\pi}. $
Thus $\psi$ sends the component 
$\reduce{X_{1,\pi}}$ on which it is defined to $\tilde{\infty}$ if $n
> n_3$ or
$\tilde{u'}\neq\tilde{0}$ in the special fiber of $\PP^1_S$ if $n = n_3$.
From this,  we conclude that the extension $\Phi
:\cX\to \cX$ is an $S$-morphism. Thus, $(\cX, \Phi)$ is a N\'{e}ron
model for $(\PP^1/\KK, \phi)$ and complete the proof of the first
case. 

Now let's consider case (2). As $2 r/3 > 1$ in this case, we have
$n_3 = 3 l + r \ge 2.$ Therefore,
\[
\psi(z_1)=u\pi^2z_1^3+a_1\pi z_2^2+a_1z_2+ u' \pi^{n_3 - 1} = \pi
\psi_1(z_1)
\]
where $\psi_1(z_1) \in \cO_{\KK}[z_1].$ We see that $\phi$
extends to an $S$-morphism  that maps the component
$\reduce{X}_{1,\pi}$ to itself. In this case, we may also conclude
that $\phi$  extends to an $S$-morphism $\Phi :\cX \to \cX$ and $(\cX,
\Phi)$ is a weak N\'{e}ron model of $(\PP^1/\KK, \phi).$    
\end{proof}

This concludes the case when the fixed points equation is irreducible
over $\KK$. 

\subsection{The reducible case}
\label{subsec:reducible}

We assume in this section that $\phi$ admits a $\KK$-rational fixed
point, which we may assume, after conjugating by a translation, is
equal to $0$. So $\phi$ has the following form~:~ 
\[
\phi(z)=\lambda z+a_2z^2+a_3z^3,\quad \lambda,a_2,a_3\in\KK.
\]
As $\lambda$ is the multiplier of the fixed point $0$, if
$v(\lambda)<0$ then $0$ is a repelling fixed point and $\phi$
does not admit a weak N\'{e}ron model. We thus assume now that
$v(\lambda)\geq 0$. We let $\nu=v(a_3)/2-v(a_2)$ and notice that this
quantity is invariant under conjugacy of $\phi$ under a dilation
centered at $0$. Moreover, by a suitable choice of dilation $z\mapsto
\pi^l z$, we may also assume that $a_2,a_3\in \cO_{\KK}$. As before,
we let $n_i=v(a_i)$ and will distinguish two cases according to the
sign of $\nu$. 

Let's assume first that $\nu\leq 0$, i.e. $n_2 \ge n_3/2.$ 
By conjugating by the dilation $z\mapsto \pi^k z$ with $k = [n_3/2]$, we
may assume that $n_3 = 0$ or $1$. If $n_3 = 0$ then $\phi$ has good
reduction and we're done. So, let's assume that $n_3 = 1$ and take
$X_0 = \PP^1_S$. We see that  $\reduce{\phi} \equiv
\reduce{\lambda}\pmod{\pi}$ and it has an indeterminacy at the point
$\reduce{z} = \infty.$ Let $X_1\to X_0$ be the blowup of the point
$\reduce{z} = \infty$ and $\cX$ be the smooth locus of $X_1.$  Let
$X_{1,\pi}$ be the dilation of $\reduce{z} = \infty$. We may use the
affine coordinate  $z_1 = \pi z$ on $X_{1,\pi}$ and $\phi$ is
represented by 
\[
\psi(z_1)= \phi(z_1/\pi) = \frac{\pi \lambda z_1+a_2z_1^2+ u z_1^3}{\pi^2}
\]
where $a_3 = u \pi$ for some unit $u\in \cO^{\ast}.$ We see that on
$X_{1,\pi}$, $\psi$ has indeterminacy at $\reduce{z_1} = 0$ which is
the point that the special fiber  $\reduce{X_{1,\pi}}$  of
$X_{1,\pi}$ meets with the special fiber of $X_0.$ Therefore, we
conclude that $\phi$ extends to an $S$-morphism $\Phi$ on $\cX$ and
$(\cX, \Phi)$ is a weak N\'{e}ron model for $(\PP^1/\KK, \phi)$ and
prove the case for $\nu \le 0.$ 

If on the contrary we have $\nu >0$ then, by conjugating $\phi$ by
$z\mapsto \pi^{-n_2}z$ we can assume that $n_2=0$ and $n_3=2\nu >0$,
so $\phi$ can be written as 
\[
\phi(z)=\lambda z + u_2z^2+u_3\pi^{2\nu} z^3,
\]
with $u_i\in\cO_{\KK}^*$ and $v(\lambda)\geq 0$. The equation for the
non-zero fixed points is 
\[
u_3\pi^{2\nu}z^2+u_2z+(\lambda-1)=0
\]
which is equivalent, letting $x=\pi^{2\nu}z$, to the equation 
\[
h(x)=u_3x^2+u_2x+(\lambda-1)\pi^{2\nu}=0.
\]
Observe that $h(x)$ has two simple roots modulo $\cM_{\KK}$ so by
Hensel's Lemma $h$ splits over $\KK.$  We conclude that all fixed
points of $\phi$ are $\KK$-rational.  
One of the two roots of $h(x)$ is a unit 
$\epsilon\in\cO_{\KK}^*$ such that $\epsilon\equiv -u_2/u_3
\pmod{\pi}$. Let $\zeta=\epsilon\pi^{-2\nu}$ be the one with
largest absolute value. The multiplier of this fixed point is
given by 
\[
\phi'(\zeta)=\frac{1}{\pi^{2\nu}}
(u_3\epsilon^2+2\epsilon(u_3\epsilon+u_2)+\lambda\pi^{2\nu})=
\frac{g(\epsilon)}{\pi^{2\nu}}
\]
and
\[
\tilde{g}(\tilde{\epsilon})=\tilde{g}(-\frac{\tilde{u_2}}{\tilde{u_3}})=
\frac{\tilde{u_2}^2}{\tilde{u_3}}\neq 0.
\]
Thus $v(\phi'(\zeta))=-2\nu<0$ and $\zeta$ is a repelling fixed
point. So $(\PP^1/\KK,\phi)$ does not admit a weak N\'{e}ron model in
this case, and this concludes the proof of the theorem. 

\begin{remark}
  \label{remark:potential good}
{\em (1)}. Let $\LL$ be the quadratic ramified extension $\KK[\sqrt{\pi}]$ of $\KK$
and denote by $v_{\LL}$ the  extension of the valuation $v$ on
$\LL$ such that $v_{\LL}= 2 v $ on $\KK$ and
$v_{\LL}(\sqrt{\pi})=1$. Equivalently, $v_{\LL}(\LL^{\ast}) = \ZZ.$  
In the case where $\nu \le 0$, we note that $v_{\LL}(a_3) = 2 v (a_3)$
which is an even integer. Then, the same argument for the case
where $n_3$ is even in the proof applies to this situation. In
conclusion,  $(\PP^1/\KK, \phi)$ has potential good reduction if
$\nu\le 0.$
\\
{\em (2)}. Let $\phi(z)\in \KK[z]$ be a cubic polynomial as above.
It follows from the proof of Theorem~\ref{thm:main2} that if 
$(\PP^1/K, \phi)$ has a weak N\'{e}ron model $(\cX, \Phi)$, then the
special fiber $\reduce{\cX}$ of $\cX$ is either $\PP^1_\kk$ in which
case $\phi$ has good reduction; or two components isomorphic to
$\PP^1_\kk$ that intersect at one closed point transversally. In fact,
the proof actually gives an algorithm to compute the reduction type of
$\phi.$ 
\end{remark}

\vspace*{3mm}
\begin{proof}[Proof  of corollary~\ref{cor:fixpt2}:]
  As periodic points
of $\phi$ are algebraic over $\KK$ we may assume $\KK$ contains the
fixed points of $\phi$. We are thus in the situation where the fixed
points equation is reducible. As the fixed points are non-repelling,
with the notations above we are necessarily in the case where $\nu\leq
0$ for which we showed that $\phi$ has potential good reduction by
Remark~\ref{remark:potential good} and thus empty Julia set.  
\end{proof}

\section{Counterexamples  in degree higher than 4}
\label{sec:counterexample}
The purpose of this section is to show that the theorem proved above
is not true any more in degree higher than three. Let the degree $d =
\deg \phi \ge 4$ and  let $p$ be the characteristic of the residue
class field $\kk.$ 
Then, except for the four cases $(p, d) = (2, 4),(2,5),(2, 7),(3,5)$,
we can write $d = e_0 + e_1$ or $d = e_0 + e_1 + e_2$ such that $e_i
\ge 2$ and $p\nmid e_i$ for $i = 0, 1,2.$ For simplicity, we consider the
former case here. That is, $d =  e_0+e_1$ with $e_i\ge 2$ and $p\nmid
e_i$ for $i =0, 1.$ Let $n = \lcm(e_0,e_1)$ and $n = e_0 e_0' = e_1
e_1'.$  
With the same notations as in the previous sections we let 
\[
\phi(z)=\frac{1}{\pi^n}z^{e_0}(z-1)^{e_1}+z
\]
be a degree $d$ polynomial with $0$ and $1$ as fixed
points which are not repelling. 
For ease of notation, we let $(x)_n=\{y\in\KK\nmid v(x-y)\geq n\}$
be the ball centered at $x$ with radius $|\pi |^n$. Let $r_i =
\lceil e_i'/(e_i - 1)\rceil, i = 0,1,$ where $\ceil{x}$ is the
ceiling of the real number $x$ (i.e. the smallest of the integer not
less than $x$). Put $s_i = e_i' + r_i, i = 0,1.$ Then, the two balls
$(0)_{s_0}$ and $(1)_{s_1}$ are both forward invariant (under the action of
$\phi$) and are thus in the Fatou set of $\phi$. A simple computation
shows that if $z$ is not in $(0)_{e_0'}\cup (1)_{e_1'}$ then its orbit escapes
to infinity. The Julia set is thus included in the union of the two
annuli $(0)_{e_0'}\setminus (0)_{s_0}$ and $(1)_{e_1'}\setminus
(1)_{s_1}$. Notice that $\phi$ takes balls $(0)_{e_0'}$ and $(1)_{e_1'}$ onto the unit
ball $(0)_0$. By symmetry, we may just consider  $\phi :(0)_{e_0'} \to
(0)_0$ only.  
Let $z=\pi^{e_0'} w$ so that  $w$ is a local coordinate on $(0)_{e_0'}$. 
Recall that $n = e_0 e_0'$, we get 
\[
\phi(\pi^{e_0'} w)=w^{e_0}(\pi^{e_0'} w -1)^{e_1}+\pi^{e_0'} w\equiv
(-1)^{e_1}w^{e_0}\mod \pi. 
\]
Since $p \nmid e_0$, there are
$\zeta_{i} \in \kk, i = 1,\ldots, e_0$ such that
$$
 (-1)^{e_1} \zeta_i^{e_0} \equiv 1 \pmod{\pi}. 
$$
As a simple consequence of Hensel's lemma, we find that there are
points $a_i \in (0)_{e_0'}$ such that $w(a_i) \equiv \zeta_i
\pmod{\pi}$ and that $\phi : (a_i)_{e_0' + 1} \to (1)_1$ are
bijectively expanding the distances by a factor of
$|\pi|^{-e_0'}$ for all $ i = 1,\ldots, e_0$. 
By the
same reason, there are points $b_j \in (1)_{e_1'}$ such that  $\phi :
(b_j)_{e_1' + 1} \to (0)_1$ are bijectively expanding by a factor of
$|\pi|^{-e_1'}$ for $j = 1,\ldots, e_1.$ We see that $\phi$ induces a
sub-dynamics on
$$
J_{\phi}\bigcap \left(\cup_{i=1}^{e_0} (a_i)_{e_0'+1} \bigcup
  \cup_{j=1}^{e_1} (b_j)_{e_1'+1}\right) 
$$
which can be conjugated
to the subshift of finite type on $e_0 + e_1 = d$ symbols whose
incidence graph is given by the complete bipartite graph with $e_0+e_1$
vertices.  In particular
$\phi$ admits no repelling fixed point while on the other hand it
admits period two repelling points : $\phi$ does not have a weak N\'{e}ron
model over $\cO_{\KK}$. 
For another case where $d = e_0 + e_1+e_2$ with $e_i \ge
2$ and $p\nmid e_i$ one can argue similarly that $J_\phi(K)$ is
non-empty while all the fixed points are non-repelling. 
\medskip

\begin{question}
  \label{quest:effectiveness}
  (1) It is an interesting question to see whether or not
  Theorem~\ref{thm:main1} is true for the four exceptional cases $(p, d) =
  (2,4),(2,5),(2,7)$ and   $(3,5)$. 
  \medskip
  \\
  (2)  Our examples raise the following question: for 
  a polynomial map of degree $d$ is 
  there a positive  integer $r$ depending on $d$ and the
  characteristic $p$ of the residue field $\kk$  so that if all 
  $\KK$-rational periodic  points with period less than $r$ are
  non-repelling  then there is no $\KK$-rational Julia set of the
  polynomial map in question? 
\end{question}
\medskip
\\
{\small\noindent{\bf Aknowlegments:~} the authors of this article
  have benefited from support from the project "Berko" of the french
  Agence Nationale pour la Recherche, of Universit\'e de Provence in
  France and National Center for Theoretical Sciences and National
  Central University in Taiwan.}

\end{document}